\newtheorem{theorem}{Theorem}[section]
\newtheorem{lemma}[theorem]{Lemma}
\newtheorem{problem}[theorem]{Problem}
\newtheorem{example}[theorem]{Example}
\newtheorem{corollary}[theorem]{Corollary}
\theoremstyle{definition}
\newtheorem{definition}[theorem]{Definition}
\newtheorem{remark}[theorem]{Remark}
\begin{document}

\title[Unknotting submanifolds of the 3-sphere by twistings]{Unknotting submanifolds of the 3-sphere by twistings}

\author{Makoto Ozawa}
\address{Department of Natural Sciences, Faculty of Arts and Sciences, Komazawa University, 1-23-1 Komazawa, Setagaya-ku, Tokyo, 154-8525, Japan}
\email{w3c@komazawa-u.ac.jp}
\thanks{The author is partially supported by Grant-in-Aid for Scientific Research (C) (No. 26400097), The Ministry of Education, Culture, Sports, Science and Technology, Japan}

\subjclass[2010]{57Q35 (Primary), 57N35 (Secondary)}

\keywords{embedding, closed surface, 3-sphere, twisting, crossing changes, unknotting, Fox's re-embedding}

\begin{abstract}
By the Fox's re-embedding theorem, any compact submanifold of the 3-sphere can be re-embedded in the 3-sphere so that it is unknotted.
It is unknown whether the Fox's re-embedding can be replaced with twistings.
In this paper, we will show that any closed 2-manifold embedded in the 3-sphere can be unknotted by twistings.
In spite of this phenomenon, we show that there exists a compact 3-submanifold of the 3-sphere which cannot be unknotted by twistings.
This shows that the Fox's re-embedding cannot always be replaced with twistings.
\end{abstract}

\maketitle

\section{Introduction}

Throughout this paper, we will work in the piecewise linear category.
We assume that a surface is a compact, connected 2-manifold and that a 2-manifold is possibly disconnected.

\begin{definition}
Let $X$ be a compact submanifold of the 3-sphere $S^3$.
Take a loop $C$ in $S^3-X$ which is the trivial knot in $S^3$.
By a $\pm 1$-Dehn surgery along $C$, we obtain another submanifold $X'$ of $S^3$ and call this operation a {\em twisting along} $C$, which is denoted by $(S^3,X)\overset{\mathrm{C}}{\longrightarrow} (S^3,X')$.

We note that $X'$ is homeomorphic to $X$, but the exterior of $X$, say $Y$, is usually not homeomorphic to one of $X'$, say $Y'$.
We also denote this deformation by $(S^3,Y)\overset{\mathrm{C}}{\longrightarrow} (S^3,Y')$.
\end{definition}


\begin{definition}
Let $X$ be a compact submanifold of $S^3$ which has $n$ connected components $X_1,\ldots,X_n$.
We say that $X=X_1\cup\cdots\cup X_n$ is {\em completely splittable} in $S^3$ if there exist $n-1$ mutually disjoint 2-spheres $S_1,\ldots,S_{n-1}$ in $S^3-X$ such that each connected component of $S^3-(S_1\cup\cdots\cup S_{n-1})$ contains a connected component of $X$.

When $X$ is completely splittable in $S^3$, if we cut open $S^3$ along $S_1\cup\cdots\cup S_{n-1}$ and glue $2(n-1)$ 3-balls along their boundaries, then we obtain $n$ pairs of the 3-sphere and the submanifold $(S^3,X_1),\ldots,(S^3,X_n)$.

For a connected component $X_i$ of $X$, we say that a pair $(S^3,X_i)$ is {\em unknotted} in $S^3$ if the exterior $E(X_i)=S^3-int N(X_i)$ consists of handlebodies.
We say that $X$ is {\em unknotted} if $X$ is completely splittable and for every pair $(S^3,X_i)$, $X_i$ is unknotted in $S^3$.
\end{definition}

\begin{remark}\label{re-embedding}
We remark that by the Fox's re-embedding theorem \cite{F}, any compact submanifold $M$ of $S^3$ can be re-embedded in $S^3$ so that $M$ is unknotted.
\end{remark}

The following is the main subject of this paper.

\begin{problem}\label{replace}
Can any Fox's re-embedding be replaced with twistings?
\end{problem}

It is well-known that Problem \ref{replace} is true for any closed 1-manifold and for any closed 2-manifold which bounds handlebodies.
In this paper, we will show that any closed 2-manifold embedded in the 3-sphere can be unknotted by twistings (Theorem \ref{main}).
In spite of this phenomenon, we show that there exists a compact 3-submanifold of the 3-sphere which cannot be unknotted by twistings (Corollary \ref{exist}).
This shows that the Fox's re-embedding cannot always be replaced with twistings.

\section{Main results}

\begin{definition}\label{compressible}
Let $F$ be a closed 2-manifold and $\alpha$ be a loop, namely, simple closed curve  in $F$.
We say that $\alpha$ is {\em inessential} in $F$ if it bounds a disk in $F$.
Otherwise, $\alpha$ is {\em essential}.
We define the {\em breadth} $b(F)$ of $F$ as the maximal number of mutually disjoint, mutually non-parallel essential loops in $F$.

Let $F$ be a closed 2-manifold embedded in $S^3$ with $b(F)>0$.
We say that $F$ is {\em compressible} in $S^3$ if there exists a disk $D$ embedded in $S^3$ such that $D\cap F=\partial D$ and $\partial D$ is essential in $F$.
Such a disk is called a {\em compressing disk} for $F$.
Then by cutting $F$ along $\partial D$, and pasting two parallel copies of $D$ to its boundaries, we obtain another closed 2-manifold $F'$ with $b(F')<b(F)$.
Such an operation is called a {\em compression along} $D$.
Conversely, if $F'$ is obtained from $F$ by a compression along $D$, then there exists a dual arc $\alpha$ with respect to $D$, that is, $\alpha$ intersects $D$ in one point and $\alpha\cap F'=\partial \alpha$ such that $F$ can be recovered from $F'$ by tubing along $\alpha$. See Figure \ref{compression}.
\begin{figure}[htbp]
	\begin{center}
	\includegraphics[trim=0mm 0mm 0mm 0mm, width=.6\linewidth]{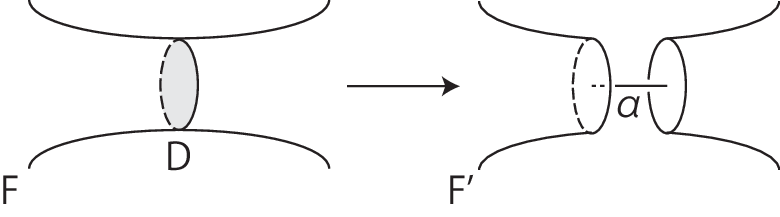}	
	\end{center}
	\caption{Compression along $D$}
	\label{compression}
\end{figure}
\end{definition}

\begin{remark}\label{key}
We remark that if $b(F)=0$, then $F$ consists of only 2-spheres and by the Alexander's theorem \cite{A}, $F$ is unknotted in $S^3$.
On the other hand, we remark that if $b(F)>0$, then by \cite{F} or \cite{H}, $F$ is {\em compressible} in $S^3$.
\end{remark}

\begin{theorem}\label{main}
Any closed 2-manifold embedded in the 3-sphere can be unknotted by twistings.
\end{theorem}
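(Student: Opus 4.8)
The plan is to argue by induction on the breadth $b(F)$, using compressions only as a device to locate the twistings. Note first that a twisting preserves the homeomorphism type of $F$, so it can never realize a compression; it can only change the embedding, and this is consistent with the goal, since unknotting also preserves the topology of $F$. When $b(F)=0$, Remark \ref{key} tells us that $F$ is a union of $2$-spheres and is already unknotted by Alexander's theorem, so there is nothing to do. For the inductive step, suppose $b(F)=k>0$. By Remark \ref{key}, $F$ is compressible, so I may fix a compressing disk $D$ and compress to obtain $F'$ with $b(F')<k$. As in Definition \ref{compressible} (see Figure \ref{compression}), there is a dual arc $\alpha$, with interior in $S^3-F'$ and endpoints on $F'$, such that $F$ is recovered from $F'$ by tubing along $\alpha$.

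By the induction hypothesis there is a finite sequence of twistings carrying $(S^3,F')$ to $(S^3,F'_*)$ with $F'_*$ unknotted. I would apply this same sequence not to $F'$ alone but to the pair $(S^3, F'\cup\alpha)$. Since each twisting loop $C_i$ and the (evolving image of the) arc $\alpha$ are both $1$-dimensional inside the $3$-dimensional complement of the surface, general position lets me isotope each $C_i$ off $\alpha$ within that complement; such an isotopy does not change the effect of the twisting on $F'$ but makes the twisting a genuine homeomorphism on a neighborhood of $\alpha$, so it carries $\alpha$ to a well-defined arc. Running the whole sequence therefore produces an unknotted $F'_*$ together with an arc $\alpha_*$, and the image of $F$ is exactly the tube on $F'_*$ along $\alpha_*$.

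It remains to show that tubing an unknotted surface along an arc can itself be undone by twistings; this reduction is the crux, and I expect it to be the main obstacle. Because $F'_*$ is unknotted, the interior of $\alpha_*$ lies in a complementary region $H$ that is a (standardly embedded) handlebody, and the goal is to make $\alpha_*$ boundary-parallel in $H$, i.e.\ to cobound a disk $\Delta\subset H$ with an arc on $F'_*$. I would achieve this in two stages. First, remove the self-knotting of $\alpha_*$ by crossing-change twistings: a crossing change is a twisting along a small unknot whose spanning disk meets only two strands of $\alpha_*$ and is disjoint from $F'_*$, so it leaves $F'_*$ untouched while simplifying $\alpha_*$. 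Second, remove the windings of $\alpha_*$ through the handles of $H$ by twisting along core curves of the complementary handlebody dual to the relevant meridian disks; each such twisting reduces a winding and carries the unknotted $F'_*$ to another unknotted surface.

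Once $\alpha_*$ is boundary-parallel, the tube can be pushed across $\Delta$ to a tube running in a collar of $F'_*$ parallel to an arc of $F'_*$; such a tube is either a trivial connect-sum tube between two components or a trivial handle on one component, and in either case the exterior remains a union of handlebodies, so the resulting surface is unknotted, completing the induction. The delicate point throughout the last stage is the bookkeeping: I must verify that the winding-removal twistings genuinely keep $F'_*$ unknotted (they re-embed it, and I only need the \emph{final} surface to be unknotted, not $F'_*$ itself fixed), and that ``boundary-parallel arc'' is precisely the condition guaranteeing a trivial tube. This relative, handlebody version of ``unknot a $1$-manifold by twistings'' --- already known for closed $1$-manifolds --- is where essentially all of the work lies, so I would expect the heart of the argument to be a self-contained lemma trivializing an arc in a complementary handlebody by twistings.
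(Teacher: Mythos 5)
Your first half reproduces the paper's argument: induction on $b(F)$, the base case via Remark \ref{key}, compression along $D$ to get $F'$ with dual arc $\alpha$, and running the inductive sequence of twistings with each circle $C_i$ pushed off $\alpha$ by general position so that $\alpha$ is carried along. The genuine gap is in your crux step, which you yourself flag as ``where essentially all of the work lies'' but then formulate incorrectly. You assert that the complementary region $H$ containing $\alpha_*$ is a standardly embedded handlebody and that the goal is to make $\alpha_*$ boundary-parallel in $H$. Both claims fail when the compressed surface is disconnected, and disconnectedness is unavoidable: compressing a surface along a \emph{separating} compressing disk (or starting with a disconnected $F$) produces a disconnected $F'$, with $\alpha$ joining two distinct components. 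In that case the region $R$ containing $\alpha_*$ is the complement of several split handlebodies lying on the \emph{other} side of the surface; it contains an essential separating $2$-sphere, so it is reducible and is not a handlebody. Worse, an arc whose endpoints lie on two different components of $\partial R$ can never be boundary-parallel (a parallelism would require an arc \emph{in} $\partial R$ joining the two components), so the lemma you defer is false as stated in precisely the case that drives the induction.

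The moves you propose are also not the ones that do the work. Twisting along a core curve of a complementary handlebody puts a full twist on \emph{all} strands of $\alpha_*$ passing through the corresponding meridian disk; this braids them but does not reduce the number of times $\alpha_*$ runs through a handle, and you give no argument that such twists together with self-crossing changes trivialize $\alpha_*$. The paper's resolution is different and handles all cases uniformly: since $F'^{(m)}$ is unknotted, the handlebodies $V_1,\ldots,V_k$ on the other side of $R$ form a regular neighborhood of a plane graph $G$ on a $2$-sphere $S$, and one performs crossing changes on $\alpha$ \emph{and crossing changes between $\alpha$ and single strands of the $V_i$} --- each realized by a twisting along a small circle encircling the two strands, hence disjoint from $\alpha$, and leaving the surface unknotted because a full twist on a single strand of $V_i$ re-embeds $V_i$ isotopically trivially --- until $\alpha$ becomes an arc on $S$. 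Then $V_1\cup\cdots\cup V_k\cup N(\alpha)$ is a regular neighborhood of the plane graph $G\cup\alpha$ on $S$, so the tubed surface is unknotted, whether $\alpha$ returns to one component or joins two distinct ones. Some version of this plane-graph (or splitting-sphere) formulation is what your deferred lemma must say; as written, your reduction both misidentifies the region containing the arc and aims at a condition that cannot hold.
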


\begin{proof}
Let $F$ be a closed 2-manifold consisting of $n$ closed surfaces $F_1,\ldots,F_n$ embedded in $S^3$.
We will prove Theorem \ref{main} by induction on the breadth $b(F)$.

In the case of $b(F)=0$, by Remark \ref{key}, $F$ is unknotted.

Next suppose that when $b(F)<b$, Theorem \ref{main} holds, and assume that $b(F)=b$.
Then by Remark \ref{key}, there exists a compressing disk $D$ for $F$.
Let $F'$ be the closed 2-manifold obtained from $F$ by a compression along $D$.
Then there exists an arc $\alpha$ such that $\alpha$ intersects $D$ in one point, $\alpha\cap F'=\partial \alpha$, and $F$ can be obtained from $F'$ by tubing along $\alpha$.
Since $b(F')<b$, by the assumption of induction, $F'$ can be unknotted by twistings.
Thus there exists a sequence of twistings 
$$(S^3,F')\overset{\mathrm{C_1}}{\longrightarrow} (S^3,F'^{(1)}) \overset{\mathrm{C_2}}{\longrightarrow}  \cdots \overset{\mathrm{C_m}}{\longrightarrow} (S^3,F'^{(m)}),$$ where $F'^{(m)}$ is unknotted.
In each stage, we may assume that $C_i\cap \alpha=\emptyset$ for $i=1,\ldots,m$.
Therefore, this sequence extends to a sequence of twistings $$(S^3,F)\overset{\mathrm{C_1}}{\longrightarrow} (S^3,F^{(1)}) \overset{\mathrm{C_2}}{\longrightarrow}  \cdots \overset{\mathrm{C_m}}{\longrightarrow} (S^3,F^{(m)}).$$

Let $R$ be the closure of a connected component of $S^3-F'$ which contains $\alpha$, and put $\partial R=F'_1\cup\cdots\cup F'_k$, where $F'_1,\ldots,F'_k$ are connected components of $F'^{(m)}$.
Since $F'^{(m)}$ is unknotted, $F'_1\cup\cdots\cup F'_k$ bounds $k$ handlebodies $V_1,\ldots,V_k$ in $S^3-int R$, and $V_1\cup\cdots\cup V_k$ is unknotted in $S^3$, namely, $V_1\cup\cdots\cup V_k$ is ambient isotopic to a regular neighborhood of a plane graph $G$ on the 2-sphere $S$.
Then by crossing changes on $\alpha$ and crossing changes between $\alpha$ and $V_i$, $\alpha$ can be unknotted, that is, $\alpha$ is isotopic to an arc on $S$ as shown in Figure \ref{unknotting}.
\begin{figure}[htbp]
	\begin{center}
	\includegraphics[trim=0mm 0mm 0mm 0mm, width=.7\linewidth]{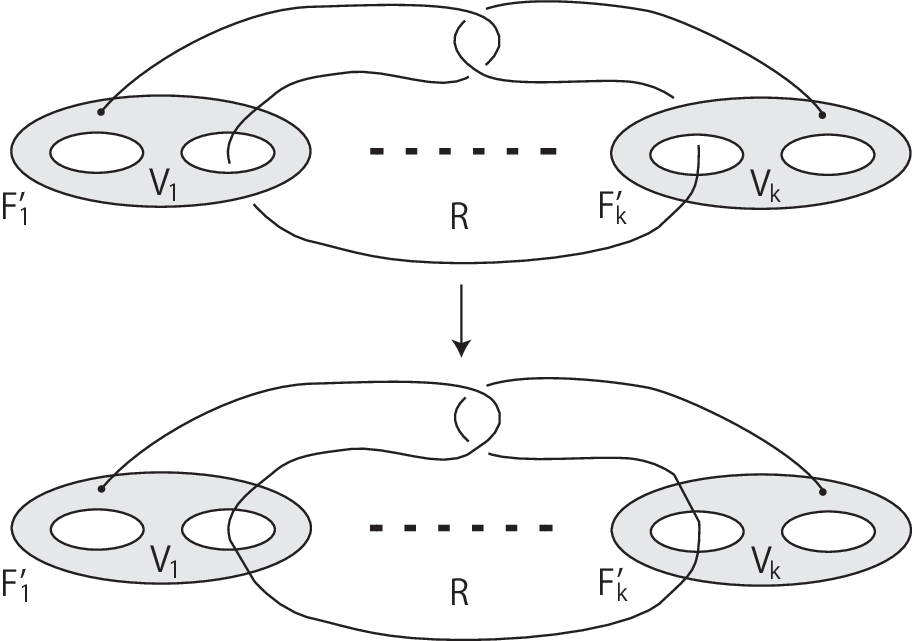}	
	\end{center}
	\caption{Unknotting $\alpha$ in $R$}
	\label{unknotting}
\end{figure}
Since these crossing changes are obtained by twistings, there is a sequence of twistings $$(S^3,F'^{(m)})\overset{\mathrm{C_{m+1}}}{\longrightarrow} (S^3,F'^{(m+1)}) \overset{\mathrm{C_{m+2}}}{\longrightarrow} \cdots \overset{\mathrm{C_{m+l}}}{\longrightarrow} (S^3,F'^{(m+l)}),$$ where $F'^{(m)}, F'^{(m+1)}, \ldots, F'^{(m+l)}$ are equivalent and $C_{m+i}\cap \alpha =\emptyset$ for $i=1,\ldots,l$.
Therefore, this sequence extends to a sequence of twistings $$(S^3,F^{(m)})\overset{\mathrm{C_{m+1}}}{\longrightarrow} (S^3,F^{(m+1)}) \overset{\mathrm{C_{m+2}}}{\longrightarrow} \cdots \overset{\mathrm{C_{m+l}}}{\longrightarrow} (S^3,F^{(m+l)}).$$
Hence, by tubing $F'$ along $\alpha$, we obtain a sequence of twistings $$(S^3,F)\overset{\mathrm{C_1}}{\longrightarrow} (S^3,F^{(1)}) \overset{\mathrm{C_2}}{\longrightarrow}  \cdots \overset{\mathrm{C_m}}{\longrightarrow} (S^3,F^{(m)})$$
$$\overset{\mathrm{C_{m+1}}}{\longrightarrow} (S^3,F^{(m+1)}) \overset{\mathrm{C_{m+2}}}{\longrightarrow} \cdots \overset{\mathrm{C_{m+l}}}{\longrightarrow} (S^3,F^{(m+l)}),$$
where $F^{(m+l)}$ is unknotted.
\end{proof}



\begin{example}
We recall an example of closed surface $H$ of genus 2 given by Homma \cite{H}, see also \cite[4.1 Theorem]{S} as shown in Figure \ref{homma}.
The surface $H$ separates $S^3$ into two components $W_1$ and $W_2$, where $W_1$ is homeomorphic to the exterior of the 4-crossing Handcuff graph $4_1$ in the table of \cite{IKMS}, and $W_2$ is a boundary connected sum of two trefoil knot exteriors.
It is remarkable that $H$ is incompressible in $W_1$, whereas $H$ has only one compressing disk $D$ in $W_2$ up to isotopy by \cite{T}, \cite{S2}.
\begin{figure}[htbp]
	\begin{center}
	\includegraphics[trim=0mm 0mm 0mm 0mm, width=.8\linewidth]{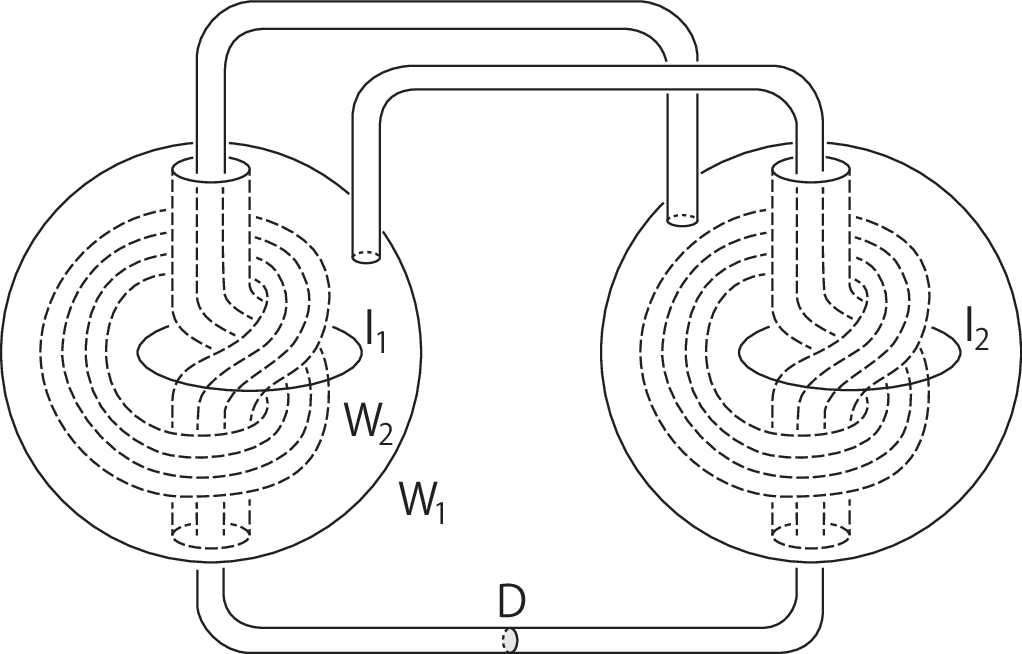}	
	\end{center}
	\caption{The Homma's closed surface}
	\label{homma}
\end{figure}
\begin{figure}[htbp]
	\begin{center}
	\includegraphics[trim=0mm 0mm 0mm 0mm, width=.6\linewidth]{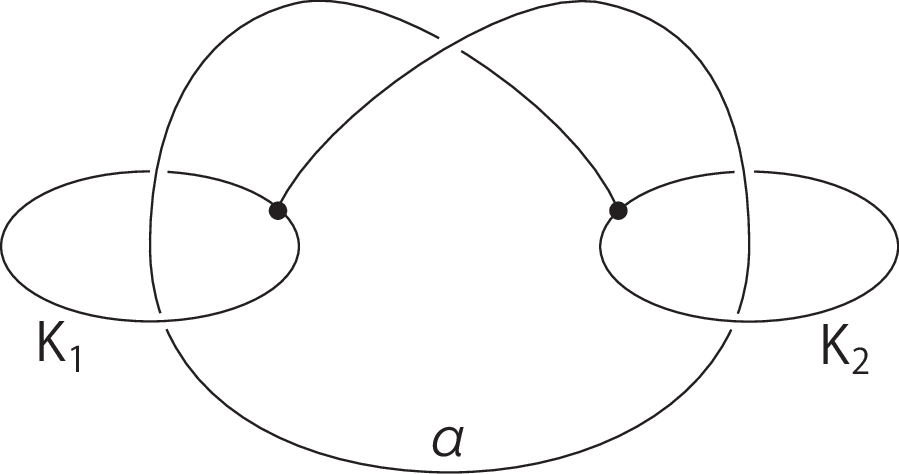}	
	\end{center}
	\caption{The 4-crossing Handcuff graph $4_1$}
	\label{handcuff}
\end{figure}
\end{example}

In spite of Theorem \ref{main}, there is a following phenomenon.

\begin{theorem}\label{main2}
The Homma's surface $H$ cannot be unknotted by twistings in $W_1$.
\end{theorem}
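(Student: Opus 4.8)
The plan is to show that any twisting along a loop contained in $W_1$ leaves the opposite side $W_2$ completely unchanged, and then to observe that $W_2$ by itself obstructs unknottedness. First I would invoke the Remark following the definition of twisting: a twisting along a loop $C$ coincides with $\pm 1$-Dehn surgery along $C$. If $C$ lies in $\mathrm{int}(W_1)$, then I may choose a regular neighborhood $N(C)\subset\mathrm{int}(W_1)$, so that the entire surgery is supported in $W_1$ and is disjoint from $W_2$. Since $C$ is trivial in $S^3$, the surgery returns a copy of $S^3$, into which the pair $(W_2,H)$ is carried unchanged; only the $W_1$-side region is replaced by a surgered region $W_1'$. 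Although the twisting is defined through a disk $D$ bounded by $C$ that may pierce $W_2$, the surgery reformulation shows the outcome is independent of $D$ and is genuinely local to $W_1$.

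With this locality in hand, I would argue by induction on the length of a hypothetical sequence of twistings performed in $W_1$. Each such twisting fixes the region lying on the $W_2$-side of the current surface, so after any finite sequence of twistings along loops in $W_1$ the $W_2$-side region remains homeomorphic, rel $H$, to the original $W_2$, namely the boundary connected sum of two trefoil knot exteriors.

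Finally I would supply the obstruction coming from $W_2$. By definition, the resulting genus-$2$ surface is unknotted only if its exterior consists of handlebodies, so in particular the $W_2$-side region would have to be a genus-$2$ handlebody. A handlebody has free fundamental group, whereas by van Kampen's theorem $\pi_1(W_2)$ is the free product of two copies of the trefoil group $\langle x,y\mid x^2=y^3\rangle$. This trefoil group is nonabelian yet has nontrivial center (generated by $x^2=y^3$), hence is not free, since a nontrivial free group is either infinite cyclic or centerless. As the trefoil group is a subgroup of $\pi_1(W_2)$ and subgroups of free groups are free by the Nielsen--Schreier theorem, $\pi_1(W_2)$ cannot be free. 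Thus $W_2$ is not a handlebody, and since the $W_2$-side region never changes under twistings in $W_1$, the surface $H$ can never be unknotted this way.

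The hard part will be establishing the locality claim carefully: one must be certain that the twisting, defined through a disk $D$ that may genuinely cross $W_2$, nevertheless leaves $(W_2,H)$ untouched. This is precisely what the $\pm 1$-surgery description secures, by permitting the surgery solid torus $N(C)$ to be chosen entirely within $W_1$.
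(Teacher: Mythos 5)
Your obstruction sits on the wrong side of $H$, and as a result you have proved a different (and essentially vacuous) statement rather than the one this theorem is about. In this paper, ``unknotted'' for a compact submanifold $X\subset S^3$ is a condition on the \emph{exterior}: $(S^3,X)$ is unknotted iff $E(X)=S^3-\mathrm{int}\,N(X)$ consists of handlebodies; $X$ itself need not be a handlebody (a standardly embedded trefoil exterior is unknotted in this sense). Accordingly, the paper's proof of Theorem \ref{main2} begins by supposing that after twistings along loops $C_i\subset S^3-W_2^{(i-1)}$ the submanifold $W_2^{(n)}$ is unknotted --- that is, that the \emph{$W_1$-side} has become a handlebody --- and derives a contradiction. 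That is the real content of the theorem: twistings along loops in $W_1$ can change the homeomorphism type of $W_1$ (they are $\pm1$-surgeries on curves inside it) and do re-embed the arc $\alpha$, yet they can never turn $W_1$ into a handlebody. Your invariant --- the homeomorphism type of the $W_2$-side --- is blind to all of this. Your two correct observations (that twisting along $C\subset S^3-W_2$ preserves the homeomorphism type of $W_2$, which the paper already notes in its definition of twisting, and that $\pi_1(W_2)$, a free product of two trefoil groups, is not free) show only that the $W_2$-side never becomes a handlebody. If that were the intended statement, the theorem would be trivially true for \emph{every} surface bounding a non-handlebody region on one side, no special property of Homma's surface would be needed, and --- decisively --- Corollary \ref{exist} would not follow: unknotting the 3-submanifold $W_2$ by twistings means exactly making its exterior a handlebody, and your argument never rules that out.

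What a correct proof must do is control how twistings in $W_1$ can act on the pair, and this is the paper's chain of lemmas, none of which has a counterpart in your proposal. Writing $W_2=E_1\cup N(\alpha)\cup E_2$ with $E_1,E_2$ trefoil exteriors: every twisting circle $C\subset S^3-W_2$ bounds a disk disjoint from $E_1\cup E_2$ (Lemma \ref{lemma1}), so such a twisting acts only on the arc $\alpha$; these twistings commute with the twistings along $l_1\cup l_2$ that replace $E_1,E_2$ by solid tori (Lemmas \ref{lemma2} and \ref{lemma3}); hence if $W_2$ could be unknotted by twistings in $W_1$, the handcuff graph $4_1$ could be trivialized by crossing changes on its cut edge alone; and this is impossible because, in the double branched cover $S^2\times S^1$ of $S^3$ branched over $K_1\cup K_2$, the lift of $\alpha$ represents $3[\gamma]\neq[\gamma]$ in $H_1(S^2\times S^1;\mathbb{Z})$, a class that crossing changes on $\alpha$ cannot alter (Lemma \ref{lemma4}). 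Your proposal, by contrast, never engages with whether $W_1$ can be simplified, which is the only genuine question here.
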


\begin{proof}
Suppose that there exists a a sequence of twistings 
$$(S^3,W_2)\overset{\mathrm{C_1}}{\longrightarrow} (S^3,W_2^{(1)}) \overset{\mathrm{C_2}}{\longrightarrow}  \cdots \overset{\mathrm{C_n}}{\longrightarrow} (S^3,W_2^{(n)}),$$ where each $C_i$ is contained in $S^3-W_2^{(i-1)}$ and $W_2^{(n)}$ is unknotted.

We regard $W_2$ as $E_1\cup N(\alpha)\cup E_2$, where $E_1$ and $E_2$ are two trefoil knot exteriors and $N(\alpha)$ is a 1-handle along a dual arc $\alpha$ with respect to $D$.

\begin{lemma}\label{lemma1}
For any twisting $(S^3,W_2)\overset{\mathrm{C}}{\longrightarrow} (S^3,W_2')$, there exists a disk $\Delta$ in $S^3$ with $\partial \Delta=C$ such that $\Delta\cap(E_1\cup E_2)=\emptyset$.
\end{lemma}

\begin{proof}[Proof of Lemma \ref{lemma1}]
Since the exterior $E(C)=S^3-int N(C)$ of $C$ is the solid torus, both of $\partial E_1$ and $\partial E_2$ are compressible in $E(C)-int(E_1\cup E_2)$.
Therefore, there exists a compressing disk $\Delta$ for $\partial E(C)$ in $E(C)$ such that $\Delta\cap (E_1\cup E_2)=\emptyset$.
This disk $\Delta$ can be extended to a disk bounded by $C$.
\end{proof}

By Lemma \ref{lemma1}, we may assume that $\alpha$ intersects $\Delta$ transversely and conclude that any twisting along $C$ takes effect only on $\alpha$.

Let $l_i$ be a loop in $E_i$, which is the trivial knot in $S^3$, such that the solid torus $V_i$ is obtained from $E_i$ by a twisting along $l_i$ as shown in Figure \ref{homma}.
Put $H_2=V_1\cup N(\alpha)\cup V_2$.
Thus we have $(S^3,W_2)\overset{\mathrm{l_1\cup l_2}}{\longrightarrow} (S^3,H_2)$.

\begin{lemma}\label{lemma2}
For $i=1,2$, there exists a disk $\delta_i$ in $S^3$ with $\partial \delta_i=l_i$ and $\delta_1\cap\delta_2=\emptyset$ such that $\delta_i\cap \Delta=\emptyset$.
\end{lemma}

\begin{proof}[Proof of Lemma \ref{lemma2}]
By Lemma \ref{lemma1}, the 3-submanifold $N(\Delta)\cup E_1\cup E_2$ is completely splittable in $S^3$.
Therefore, there exists a disk $\delta_i$ $(i=1,2)$ bounded by $l_i$ such that $\delta_1\cap\delta_2=\emptyset$ and $\delta_i\cap \Delta=\emptyset$.
\end{proof}

By Lemma \ref{lemma2}, we have the following lemma.

\begin{lemma}\label{lemma3}
The following diagram is commutative.
$$
\begin{CD}
    (S^3,W_2)  @>C>> (S^3,W_2')  \\
    @Vl_1\cup l_2VV    @Vl_1\cup l_2VV \\
    (S^3,H_2) @>C>> (S^3,H_2') 
 \end{CD}
 $$
\end{lemma}

By the supposition and Lemma \ref{lemma3}, we have the following commutative diagram.
$$
\begin{CD}
    (S^3,W_2)  @>C_1>> (S^3,W_2^{(1)}) @>C_2>> \cdots @>C_n>> (S^3,W_2^{(n)}) \\
    @Vl_1\cup l_2VV  @Vl_1\cup l_2VV  @.  @Vl_1\cup l_2VV \\
    (S^3,H_2) @>C_1>> (S^3,H_2^{(1)}) @>C_2>> \cdots @>C_n>> (S^3,H_2^{(n)}) 
 \end{CD}
 $$

Since $W_2^{(n)}$ is unknotted in $S^3$, $H_2^{(n)}$ is also unknotted in $S^3$.
It follows from \cite{ST} or \cite{OT} that the Handcuff graph corresponding to $H_2^{(n)}$ is trivial.
Thus, the Handcuff graph $4_1$ corresponding to $H_2$ can be trivialized by crossing changes only on $\alpha$.
However, it contradicts the following lemma. 
\end{proof}





\begin{lemma}\label{lemma4}
The Handcuff graph $4_1$ cannot be trivialized by crossing changes only on its cut edge.
\end{lemma}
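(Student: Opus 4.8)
The plan is to exhibit an invariant of the spatial handcuff graph that is insensitive to crossing changes performed on the cut edge yet separates $4_1$ from the trivial handcuff graph. The natural choice is the \emph{constituent link} $L=K_1\cup K_2$ obtained by deleting the cut edge, where $K_1$ and $K_2$ are the two loops.

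First I would record the elementary but decisive observation that a crossing change on the cut edge $e$ occurs at a crossing at least one of whose strands belongs to $e$, and hence never alters a self-crossing of a loop nor a crossing between the two loops. Keeping $K_1\cup K_2$ pointwise fixed and modifying only $e$, the $2$-component link $L$ is carried to an isotopic link; thus the link type of $L$, and in particular the linking number $\mathrm{lk}(K_1,K_2)$ together with the knot types of $K_1$ and $K_2$, is an invariant under crossing changes on the cut edge. I would then note that a trivial handcuff graph lies on a $2$-sphere, so its two loops bound disjoint disks and $L$ is the $2$-component unlink. Consequently, were $4_1$ trivializable by crossing changes on its cut edge, its constituent link would have to be the unlink.

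It therefore remains to prove that the constituent link of $4_1$ is nontrivial. I would read $L$ off from the diagram of Figure \ref{handcuff} by erasing the cut edge, and then compute an invariant: I expect the linking number $\mathrm{lk}(K_1,K_2)$ to be nonzero, which settles the matter immediately; should it vanish, I would instead appeal to a finer invariant of $L$ (its multivariable Alexander polynomial, its Jones polynomial, or a Milnor $\bar\mu$-invariant) to certify that $L$ is not the unlink. Combining this with the previous paragraph yields the contradiction, proving the lemma.

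The main obstacle is precisely this last step: correctly identifying the constituent link from the diagram and proving its nontriviality. This is routine if the linking number is nonzero, but becomes delicate if the linking number happens to vanish, since then one is forced to invoke a more sensitive link invariant and to verify it carefully against the specific diagram of $4_1$.
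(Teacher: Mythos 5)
Your strategy has a fatal gap at precisely the step you flagged as the main obstacle, and it is not repairable along the lines you suggest: for the handcuff graph $4_1$ the constituent link $L=K_1\cup K_2$ is the \emph{trivial} two-component link. (This is exactly what the paper exploits -- its proof begins by taking the double branched cover of $S^3$ ``along the trivial link $K_1\cup K_2$''.) The knotting of $4_1$ lives entirely in how the cut edge $\alpha$ threads through the two loops, not in the loops themselves; this is characteristic of minimally knotted spatial graphs, all of whose proper subgraphs are trivial. Consequently no invariant of $L$ alone -- linking number, multivariable Alexander polynomial, Jones polynomial, Milnor invariants -- can distinguish $4_1$ from the trivial handcuff graph, since both have the unlink as constituent link. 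Your (correct) observation that crossing changes on the cut edge preserve the link type of $L$ therefore produces no contradiction, and the proof cannot be completed this way.

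What is needed is an invariant that records the position of $\alpha$ relative to $K_1\cup K_2$ while remaining unchanged under self-crossing changes of $\alpha$; your general framework is right, but the invariant must be finer. The paper's choice: take the double cover of $S^3$ branched over the unlink $K_1\cup K_2$, which is $S^2\times S^1$. The preimage of the arc $\alpha$ (whose endpoints lie on the branch locus) is a knot $\tilde\alpha\subset S^2\times S^1$. A crossing change on $\alpha$ lifts to crossing changes on $\tilde\alpha$, which do not alter the class $[\tilde\alpha]\in H_1(S^2\times S^1;\mathbb{Z})\cong\mathbb{Z}$, so this class is invariant under the allowed moves. For $4_1$ one computes $[\tilde\alpha]=3[\gamma]$, where $\gamma$ is a generator, whereas the trivial handcuff graph yields $[\tilde\alpha]=[\gamma]$; since $3[\gamma]\neq[\gamma]$, the graph $4_1$ cannot be trivialized by crossing changes on its cut edge.
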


\begin{proof}[Proof of Lemma \ref{lemma4}]
Let $K_1\cup\alpha\cup K_2$ be the Handcuff graph $4_1$, whose exterior is homeomorphic to $W_1$.
We take a double branched cover of $S^3$ along the trivial link $K_1\cup K_2$ as follows.
Let $D_i$ be a disk bounded by $K_i$ which intersects $\alpha$ in one point $(i=1,2)$.
We cut open $S^3$ along $D_1\cup D_2$ and take a copy of it.
Those 3-manifolds are both homeomorphic to $S^2\times I$ and whose boundary consists of 2-spheres $D_1^+\cup D_1^-$, $D_2^+\cup D_2^-$, ${D'}_1^+\cup {D'}_1^-$, ${D'}_2^+\cup {D'}_2^-$.
Then by gluing $D_1^+$ and ${D'}_1^-$, $D_1^-$ and ${D'}_1^+$, $D_2^+$ and ${D'}_2^-$, $D_2^-$ and ${D'}_2^+$  we obtain $S^2\times S^1$ and a knot $\tilde{\alpha}$ obtained from $\alpha$ and $\alpha'$ as shown in Figure \ref{double}.
We note that $[\tilde{\alpha}]=3[\gamma]$ in $H_1(S^2\times S^1;\mathbb{Z})\cong\mathbb{Z}$, where $\gamma$ is a generator of $H_1(S^2\times S^1;\mathbb{Z})$.

\begin{figure}[htbp]
	\begin{center}
	\includegraphics[trim=0mm 0mm 0mm 0mm, width=.9\linewidth]{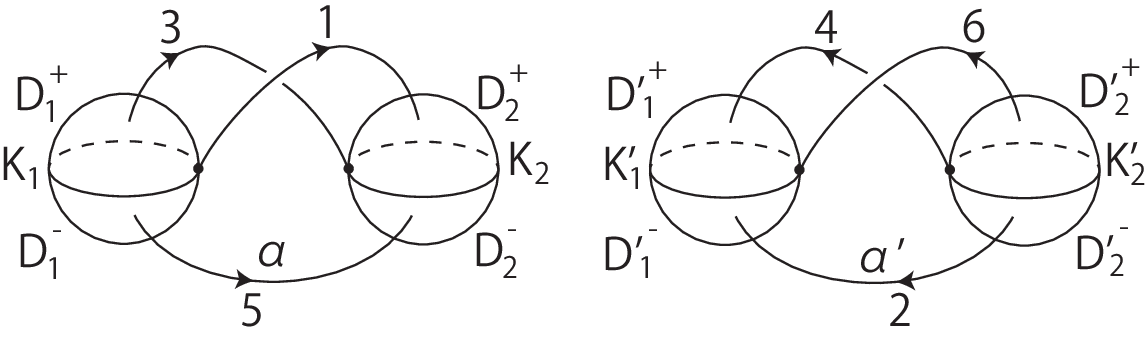}	
	\end{center}
	\caption{The double branched cover of $S^3$ along $K_1\cup K_2$}
	\label{double}
\end{figure}

Suppose that $K_1\cup\alpha\cup K_2$ is unknotted by crossing changes on $\alpha$.
Then the homology class $[\tilde{\alpha}]$ in $H_1(S^2\times S^1;\mathbb{Z})$ does not change by the crossing changes, and we have $[\tilde{\alpha}]=3[\gamma]$.
However, since $K_1\cup\alpha\cup K_2$ is trivial, we have $[\tilde{\alpha}]=[\gamma]$.
This is a contradiction.
\end{proof}

By the Fox's re-embedding theorem \cite{F}, there exists a re-embedding of $W_2$ in $S^3$ such that $W_2$ is unknotted.
However, this Fox's re-embedding cannot be obtained by twistings.

\begin{corollary}\label{exist}
There exists a 3-submanifold of $S^3$ which cannot be unknotted by twistings.
\end{corollary}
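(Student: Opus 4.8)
The plan is to exhibit the 3-submanifold $W_2$ from the Homma example as the required witness. Recall that $W_2$ is the boundary connected sum of two trefoil knot exteriors, realized in $S^3$ as the closure of one of the two complementary regions of the Homma surface $H$; in particular it is a compact, connected 3-submanifold of $S^3$, and its complementary region is $W_1$, so $S^3-W_2=\mathrm{int}\,W_1$. By the very definition of a twisting, every twisting applied to $X=W_2$ is performed along a loop $C$ lying in $S^3-W_2$, that is, in $\mathrm{int}\,W_1$.

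The first step is to identify unknotting the 3-manifold $W_2$ with unknotting its boundary surface $H=\partial W_2=\partial W_1$. Indeed, $W_2$ is unknotted in the sense defined above precisely when $W_2$ is a handlebody, equivalently when $H$ is an unknotted closed surface in $S^3$; and since every twisting of $W_2$ is supported on a loop in $W_1$, a sequence of twistings that unknots $W_2$ is exactly a sequence of twistings that unknots $H$ using curves in $W_1$. Thus the assertion ``$W_2$ can be unknotted by twistings'' coincides with the assertion that $H$ can be unknotted by twistings in $W_1$.

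The second step is simply to invoke Theorem \ref{main2}, which states exactly that $H$ cannot be unknotted by twistings in $W_1$. Consequently $W_2$ cannot be unknotted by twistings. On the other hand, Remark \ref{re-embedding} (the Fox re-embedding theorem) guarantees that $W_2$ admits a re-embedding in $S^3$ under which it becomes unknotted. Hence $W_2$ is a compact 3-submanifold of $S^3$ that can be unknotted by a re-embedding but not by any sequence of twistings, which is precisely the content of the corollary and simultaneously answers Problem \ref{replace} in the negative.

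Because Theorem \ref{main2} already carries all of the topological weight, the corollary itself is essentially immediate. The only point demanding care — and hence the main, though minor, obstacle — is the bookkeeping identification between ``unknotting the 3-manifold $W_2$'' and ``unknotting its boundary surface $H$ by twistings supported in $W_1$,'' together with the routine verifications that $W_2$ is genuinely a compact 3-submanifold and that every admissible twisting loop necessarily lies in $\mathrm{int}\,W_1$. Each of these follows directly once the relevant definitions are unwound.
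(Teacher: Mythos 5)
Your overall route is exactly the paper's: the paper's proof of Corollary \ref{exist} is literally ``take $W_2$,'' with all of the topological content residing in Theorem \ref{main2}, so observing that every admissible twisting curve lies in $S^3-W_2=\mathrm{int}\,W_1$ and then citing that theorem is the intended argument. However, the ``bookkeeping identification'' that you single out as the one step demanding care is the step you get wrong. The paper defines $(S^3,X_i)$ to be unknotted when the \emph{exterior} $E(X_i)=S^3-\mathrm{int}\,N(X_i)$ consists of handlebodies; for $X=W_2$ this exterior is (a copy of) $W_1$, so ``$W_2$ is unknotted'' means ``$W_1$ is a handlebody,'' not ``$W_2$ is a handlebody'' as you assert. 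Your reading cannot be the intended one: a twisting never changes the homeomorphism type of the submanifold, and $W_2$, being a boundary connected sum of two trefoil exteriors, is not a handlebody (its fundamental group is not free), so under your reading the corollary would be vacuously true without any appeal to Theorem \ref{main2}; worse, Fox's re-embedding theorem, which you invoke at the end, would be contradicted, since a re-embedding does not change the homeomorphism type of $W_2$ either.

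The second half of your equivalence has the same defect: ``$H$ is an unknotted closed surface'' in the paper's sense would require \emph{both} complementary regions of $H$ to be handlebodies, which never happens here for the same reason. The phrase ``unknotted by twistings in $W_1$'' in Theorem \ref{main2} means, as its proof makes explicit, that some sequence of twistings along curves $C_i\subset S^3-W_2^{(i-1)}$ ends with $W_2^{(n)}$ unknotted, i.e.\ with the $W_1$-side a handlebody. Once your two equivalences are replaced by the correct one --- $W_2$ can be unknotted by twistings if and only if some sequence of twistings along curves in $S^3-W_2$ turns its exterior into a handlebody, which is precisely what Theorem \ref{main2} forbids --- your argument coincides with the paper's and is complete.
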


\begin{proof}
Take $W_2$ as a 3-submanifold of $S^3$.
\end{proof}

\section{Concluding remarks}

We conclude with some remarks on topics related with subjects in this paper.

\subsection{Fox's re-embeddings and Dehn surgeries}
We remark that by \cite[Theorem 1.6]{OS}, there exists a null-homologous link $L$ in $W_1$, which is reflexive in $S^3$, such that a handlebody can be obtained from $W_1$ by a $1/\mathbb{Z}$-Dehn surgery along $L$, that is, $[L]=0$ in $H_1(W_1;\mathbb{Z})$ and there exists a surgery slope $1/n_i$ for each component $L_i$ of $L$ such that a pair of $S^3$ and a handlebody is obtained from $(S^3,W_1)$ by a Dehn surgery along $L$. 
Therefore, the Fox's re-embedding can be replaced with a Dehn surgery along a link.
At the time of writing  of \cite{OS}, it was unknown whether this Dehn surgery can be replaced with twistings.
Corollary \ref{exist} shows that it is not always true.

\subsection{The number of equivalence classes by twistings}
Corollay \ref{exist} and the Fox's re-embedding theorem shows that there exists a compact 3-submanifold $W_2$ of $S^3$ such that the number of equivalence classes of $W_2$ by twistings is at least two.
It can be observed that along the proofs of Theorem \ref{main2} and Lemma \ref{lemma4}, $W_2$ has  infinitely many equivalence classes by twistings.
To see this, consider an embedding of $W_2=E_1\cup N(\alpha)\cup E_2$ in $S^3$, where $\alpha$ goes through $E_1$ and $E_2$ $n$ times respectively.
Thus, $\alpha$ intersects $D_i$ in $n$ points $(i=1,2)$.
Then we have that $[\tilde{\alpha}]=(2n+1)[\gamma]$ in $H_1(S^2\times S^1;\mathbb{Z})$ and this homology class is an invariant for crossing changes on $\alpha$ and hence twistings on $W_2$.
Therefore, by varying $n$, we obtain infinitely many equivalence classes of $W_2$ by twistings. 

\subsection{Nugatory twistings on submanifolds}
It is known as the Lin's nugatory crossing conjecture in {\cite[Problem 1.58]{K}} that if an oriented knot does not change by a crossing change, then the crossing is nugatory.
This conjecture holds on the trivial knot by \cite{ST1}, 2-bridge knots by \cite{T1} and fibered knots by \cite{K1}.
Analogously, we propose the "nugatory twisting conjecture" on submanifolds of $S^3$, that is, if a submanifold of $S^3$ does not change by a twisting, then the twisting is nugatory.

\subsection{Uniqueness of embeddings of submanifolds}
Any closed 1-manifold or closed orientable 2-manifold except for the 2-sphere has infinitely many non-equivalent embeddings in $S^3$, namely links or knotted surfaces.
However, it is well-known by \cite{GL} that any non-trivial knot exterior in $S^3$ has only one embedding in $S^3$.
We remark that any non-trivial knot exterior $X$ satisfies the following condition: any non-contractible loop $l$ in $S^3-X$ is non-trivial in $S^3$, that is, $X$ does not admit a non-trivial twisting.

In the below-mentioned, if such a condition is not satisfied, then there are infinitely many embeddings of a submanifold contrary to the case of non-trivial knot exteriors.
Let $X$ be a 3-submanifold $X$ of $S^3$.
Suppose that there exists a non-contractible loop $l$ in $S^3-X$ which is trivial in $S^3$.
Then, the exterior $E(l)=S^3-int N(l)$ of $l$ is a solid torus containing $X$.
By re-embedding of $E(l)$ in $S^3$ so that it is knotted in $S^3$, we obtain infinitely many embeddings of $X$ in $S^3$.
More generally, if $X$ is contained in a submanifold $Y$ so that $Y-X$ is irreducible and $Y$ has infinitely many embeddings in $S^3$, then one can obtain infinitely many embeddings of $X$ in $S^3$.

\bigskip

\noindent{\bf Acknowledgements.}
The author would like to thank Kouki Taniyama, Ryo Nikkuni and Yukihiro Tsutsumi for helpful comments.

After the publication of this paper, Arkadi Skopenkov, who is the reviewer of Mathematical Reviews of American Mathematical Society, pointed out several ambiguous definitions.
I would like to thank him and revised in arXiv:1609.06573v3.

\bibliographystyle{amsplain}

\end{document}